\documentclass[11pt]{amsart}

\usepackage[T1]{fontenc}
\usepackage{lmodern}
\usepackage{microtype}
\usepackage{amsmath,amssymb,mathtools}
\usepackage{enumitem}
\usepackage{xcolor}

\definecolor{linkblue}{RGB}{0,70,130}
\definecolor{citegreen}{RGB}{0,100,75}
\definecolor{urlblue}{RGB}{0,90,140}

\usepackage[
  colorlinks=true,
  linkcolor=linkblue,
  citecolor=citegreen,
  urlcolor=urlblue
]{hyperref}
\usepackage{orcidlink}
\usepackage{todonotes}
\makeatletter
\renewcommand{\@makefntext}[1]{%
  \noindent\@makefnmark\enspace#1%
}
\makeatother
\newtheoremstyle{compactplain}
  {3pt}        
  {3pt}        
  {\itshape}   
  {}           
  {\bfseries}  
  {.}          
  {0.5em}      
  {}           

\theoremstyle{compactplain}
\newtheorem{theorem}{Theorem}[section]
\newtheorem{proposition}[theorem]{Proposition}
\newtheorem{lemma}[theorem]{Lemma}

\theoremstyle{definition}
\newtheorem{definition}[theorem]{Definition}

\theoremstyle{remark}
\newtheorem{remark}[theorem]{Remark}

\DeclareMathOperator{\Prob}{Prob}
\DeclareMathOperator{\supp}{supp}

\newcommand{\E}{\mathbb E}
\newcommand{\one}{\mathbf 1}
\newcommand{\na}{\mathrm{na}}

\def\mc{\mathcal}
\def\mb{\mathbb}
\def\mr{\mathrm}
\def\M{\mc M}
\def\id{\mr{id}}

\title[Li--Yorke Chaos Along Any Infinite Sequence]
{Li--Yorke Chaos Along Any Infinite Sequence:\\
Relative Mixing, Sofic and Rokhlin Entropy}

\author[C. Liu]{Chunlin Liu\orcidlink{0000-0001-6277-013X}}

\address[C. Liu]{School of Mathematical Sciences,
Dalian University of Technology,
Dalian 116024, P.R. China;
and Institute of Mathematics,
Polish Academy of Sciences,
ul. Śniadeckich 8,
00-656 Warszawa, Poland}

\email{chunlinliu@mail.ustc.edu.cn}
\subjclass[2020]{Primary 37B05; Secondary 37B40, 37A35, 37A25}
\keywords{Li--Yorke chaos, relative mixing, sofic entropy, Rokhlin entropy,
Pinsker factor, countable group actions}
\thanks{This article  was supported by the
Postdoctoral Fellowship Program and China Postdoctoral Science Foundation under Grant Number BX20250067, and the China Postdoctoral Science Foundation under Grant Number 2025M773074.
}
\begin{document}
\begin{abstract}
Let $G$ be a countably infinite discrete group and let
$
\pi:(X,\mu,G)\to(Y,\nu,G)
$
be a nontrivial relatively mixing extension, where $X$ is a compact
metrizable $G$-space. We prove that there exists a constant $\delta>0$ such that, for every injective sequence
$(s_i)_{i\geq 1}$ in $G$, there is a Cantor set $K_{(s_i)}\subseteq X$
whose distinct points $x,x'$ satisfy
\[
\liminf_{i\to\infty}\rho(s_i x,s_i x')=0,
\qquad
\limsup_{i\to\infty}\rho(s_i x,s_i x')>\delta.
\]
The method also yields higher-order scrambled Cantor sets. As a principal application, for a sofic group $G$, positive
topological sofic entropy implies
the preceding conclusion, answering a question of Huang, Li, and Ye. The
same conclusion  also holds for
actions of arbitrary countably infinite discrete groups admitting an essentially free
invariant  measure of positive Rokhlin entropy.
\end{abstract}
\maketitle
\section{Introduction}\label{sec:introduction}

Li--Yorke chaos was introduced in the seminal work of Li and Yorke
\cite{LiYorke}.  For a compact metric dynamical system $(X,T)$, a pair of
points $x\neq x'$ is called a \emph{Li--Yorke pair} if
\[
 \liminf_{n\to\infty}\rho(T^n x,T^n x')=0
 \qquad\text{and}\qquad
 \limsup_{n\to\infty}\rho(T^n x,T^n x')>0.
\]
A fundamental result of Blanchard, Glasner, Kolyada, and Maass shows that
positive topological entropy implies Li--Yorke chaos
\cite{BlanchardGlasnerKolyadaMaass}.  Subsequent work connected this
phenomenon with asymptotic pairs, relative entropy, and combinatorial
independence; see, among others,
\cite{BlanchardHostRuette,ZhangRelative,KerrLiIndependence}.  For actions of
countable groups, Kerr and Li proved that positive sofic topological entropy
implies Li--Yorke chaos in the groupwise sense
\cite[Corollary~8.4]{KerrLiCI}.

The situation becomes considerably more rigid when the observation times are
prescribed in advance.  Let $G$ act continuously on a compact metric space
$(X,\rho)$, and let $(s_i)_{i\geq1}$ be a sequence of pairwise distinct
elements of $G$.  One asks for an uncountable set whose distinct points
satisfy
\[
 \liminf_{i\to\infty}\rho(s_i x,s_i x')=0
 \qquad\text{and}\qquad
 \limsup_{i\to\infty}\rho(s_i x,s_i x')>0.
\]
This is stronger than ordinary groupwise Li--Yorke chaos: close and separated
orbit times available somewhere in $G$ need not survive after restricting to
an arbitrary sparse injective sequence.  Huang, Li, and Ye proved that every
positive-entropy action of a countably infinite amenable group is Li--Yorke
chaotic along every prescribed injective sequence
\cite[Theorem~1.1]{HuangLiYe}.  They asked whether the same conclusion holds for sofic group actions
\cite[Remark~4.3]{HuangLiYe}.

The obstruction to extending their proof to sofic group actions is quite
specific. Starting from an ergodic measure of positive entropy, Huang, Li,
and Ye disintegrate over the Pinsker factor and work on the relatively
independent square. The central input is their Theorem~2.6, which gives an
approximate additivity formula for conditional entropy over finite subsets
$Q\subseteq G$ whose nontrivial quotients avoid a prescribed finite set.
By recursively extracting such sets $Q$ from the tail of the given sequence,
they obtain full conditional measure for the required close and separated
relations. Their argument also uses the Pinsker structure of the relative
square.

To the best of our knowledge, the existing sofic entropy theory does not
currently provide these ingredients in a form suitable for a direct adaptation
of the argument of Huang, Li, and Ye.  Sofic entropy is defined through finite model spaces rather than
finite joins over subsets of the acting group.  Moreover, although Pinsker
product formulae are available under additional hypotheses---notably
Hayes's outer Pinsker product formula for strongly sofic
actions~\cite{HayesPinskerProduct}---they do not directly provide the conditional-entropy additivity statement needed in the argument  of
\cite[Theorem~2.6]{HuangLiYe}.
Huang, Li, and Ye explicitly identified this difficulty in
\cite[Remark~4.3]{HuangLiYe}, observing that their proof depends essentially
on Theorem~2.6 and that it was unclear whether the argument extends to
sofic group actions.

Our first main result answers their question affirmatively.
\begin{theorem}\label{thm:sofic}
Let $G$ be a countably infinite sofic group, let
$
 \Sigma=(\sigma_j:G\to\operatorname{Sym}(d_j))_{j\geq1}
$
be a fixed sofic approximation, and let $G\curvearrowright(X,\rho)$ be a
continuous action on a compact metrizable space.  If
$
 h^{\mathrm{top}}_\Sigma(X,G)>0,
$
then there exists $\delta>0$ such that, for every sequence
$\mathbf{s}=(s_i)_{i\geq1}$ of pairwise distinct elements of $G$, there is a Cantor set
$
 K_{\mathbf{s}}\subseteq X
$
for which every distinct $x,x'\in K_{\mathbf{s}}$ satisfy
\[
 \liminf_{i\to\infty}\rho(s_i x,s_i x')=0,
 \qquad
 \limsup_{i\to\infty}\rho(s_i x,s_i x')>\delta.
\]
The constant $\delta$ is independent of the prescribed sequence
$\mathbf{s}$.
\end{theorem}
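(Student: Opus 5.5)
The plan is to deduce Theorem~\ref{thm:sofic} from the abstract result announced in the abstract: for a nontrivial relatively mixing extension $\pi:(X,\mu,G)\to(Y,\nu,G)$ there is $\delta>0$ such that every injective sequence $(s_i)$ admits a Cantor set $K_{(s_i)}$ scrambled along $(s_i)$ with separation $\delta$. So the work is to produce, from $h^{\mathrm{top}}_\Sigma(X,G)>0$, an invariant measure $\mu$ on $X$ and a factor $Y$ such that $X\to Y$ is relatively mixing and nontrivial. Once this is done, the conclusion is immediate. The constant $\delta$ depends only on $\mu$ and $\pi$, not on $\mathbf s$, since $\mu$ is chosen before $\mathbf s$.

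First, apply the sofic variational principle (Kerr--Li) to get a $G$-invariant Borel probability measure $\mu$ with $h_{\Sigma,\mu}(X,G)>0$. Passing to an ergodic component is delicate in the sofic setting, because sofic measure entropy need not be affine. I would instead use the weaker fact that positive sofic measure entropy forces $\mu$ to be non-Pinsker in a robust sense. Let $\Pi_\mu$ be the (outer) Pinsker factor of $(X,\mu,G)$ relative to $\Sigma$, and let $Y$ be this factor. Positive entropy means $\pi:X\to Y$ is nontrivial: if $\mu$ were measurable with respect to its Pinsker factor, the entropy would vanish.

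The main step is to show that $X\to\Pi_\mu$ is relatively mixing. That is, for measurable $A,B\subseteq X$,
\[
\int \bigl|\E(\one_A\cdot \one_B\circ s\mid Y)-\E(\one_A\mid Y)\,\E(\one_B\mid Y)\circ s\bigr|\,d\nu\to0 \quad (s\to\infty).
\]
The standard route is by contradiction via a relative compact (weakly mixing fails) structure. If the extension is not relatively mixing, then there is a nonzero element $f$ of $L^2(X\mid Y)$ whose orbit under $G$ has a nontrivial relatively compact piece in the relatively independent self-joining $X\times_Y X$. This yields an intermediate factor $Y\subsetneq Z\subseteq X$ such that $Z\to Y$ has zero relative entropy, because compact-type extensions carry no entropy. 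One should then argue $Z\subseteq\Pi_\mu$, contradicting maximality. In the sofic world one needs that a zero-entropy extension of the Pinsker factor stays in the Pinsker factor. This holds for the outer Pinsker factor via Hayes's subadditivity ("outer entropy of a join is at most the entropy of the base plus relative entropy"), and I would invoke his results here. This is the step I expect to be the main obstacle. Relative mixing is stronger than relative weak mixing, so the compact-structure argument alone gives only relative weak mixing. For mixing one must show that any failure of mixing along some sequence $s_n\to\infty$ produces a low-entropy factor. I would try the approach of Kerr--Li independence: realize non-mixing as a nontrivial weak limit of the Koopman operators $U_{s_n}$ on $L^2(X\mid Y)$. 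This limit is a Markov operator, and its "fixed" part should generate a factor of zero relative sofic entropy, by a sofic analogue of the Pinsker completely-positive-entropy-implies-mixing argument.

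If this direct Pinsker route proves intractable, the fallback is the Rokhlin-entropy statement in the abstract. Positive sofic measure entropy implies positive Rokhlin entropy, and one can use Seward's theory to pass to an essentially free measure with positive Rokhlin entropy, then relative Pinsker factors there. That would reduce Theorem~\ref{thm:sofic} to the Rokhlin case, which in turn reduces to the relative mixing theorem.
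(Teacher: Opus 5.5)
Your reduction is exactly the paper's. The Kerr--Li variational principle gives $\mu\in\mathcal M_G(X)$ with $h_{\Sigma,\mu}(X,G)>0$. The Pinsker factor associated with $\Sigma$ is then not an isomorphism. Theorem~\ref{thm:abstract}~(ii) supplies a $\delta$ fixed before $\mathbf s$ is given, and you take one Cantor set inside the dense Mycielski set.

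The gap is the step you flag yourself: you never prove that $X\to\Pi_\mu$ is relatively mixing, and the routes you sketch do not prove it.
\begin{itemize}
\item The compact-structure argument only detects failure of relative \emph{weak} mixing, as you note.
\item The upgrade via a weak limit $P$ of the $U_{s_n}$ is not an argument. $P$ need not commute with the $G$-action, since $U_gP=\lim_n U_{gs_n}$ while $PU_g=\lim_n U_{s_ng}$. So there is no reason anything it fixes spans a factor.
\item More fundamentally, failure of mixing does not by itself produce any nontrivial factor. Chacon's transformation is weakly mixing, not mixing, and has no nontrivial proper factors. Entropy would have to enter precisely where you manufacture the intermediate factor $Z$, and you give no mechanism for this.
\end{itemize}
The missing ingredient is that this step is a known theorem. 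Hayes proved that every sofic p.m.p.\ action is relatively mixing over its Pinsker factor with respect to $\Sigma$ \cite[Theorem~3.4(i)]{Hayes}, and the paper simply cites it. With that citation in place of your sketch, your argument is complete and coincides with the paper's.

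Your fallback does not close the gap either. Theorem~\ref{thm:rokhlin}, and Seward's relative mixing result \cite[Corollary~5.2(1)]{SewardKoopman} behind it, require $G\curvearrowright(X,\mu)$ to be essentially free. Nothing lets you replace $\mu$ by an essentially free invariant measure on the same space $X$. For example, let $G=\mathbb Z\times\mathbb Z/2$ act on $\{0,1\}^{\mathbb Z}$ through the shift in the first coordinate. This action has positive topological entropy, which for amenable $G$ equals $h^{\mathrm{top}}_\Sigma$. Yet every point is fixed by $(0,1)$, so no invariant measure is essentially free and the Rokhlin route cannot even start.
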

\begin{remark}
\label{rem:naive-entropy}
Theorem~\ref{thm:sofic} also gives a partial affirmative answer to a
broader question of Garc\'ia-Ramos and Li, who asked whether positive
naive topological entropy implies Li--Yorke chaos along every infinite
subset of the acting group
\cite[Question~4.28]{GarciaRamosLiLocalEntropy}.
Indeed, Burton's comparison theorem implies that positive topological
sofic entropy with respect to a fixed sofic approximation entails
positive naive topological entropy \cite{Burton2017}.  Whether positive naive topological
entropy alone is sufficient remains open.
\end{remark}

The proof does not attempt to reproduce the conditional-entropy argument of
\cite{HuangLiYe} in sofic model spaces.  Instead, we isolate a
measure-theoretic mechanism which is independent of entropy and valid for
every countably infinite group.

\begin{theorem}\label{thm:abstract}
Let $G\curvearrowright(X,\rho)$ be a continuous action on a compact
metrizable space preserving a Borel probability measure $\mu$, and let
$
 \pi:(X,\mu,G)\to(Y,\nu,G)
$
be a relatively mixing factor map, which is  not an isomorphism.  Write
$
 \mu=\int_Y\mu_y\,d\nu(y)
$
for the disintegration and put
\[
 Y_{\na}:=\{y\in Y:\mu_y\text{ is nonatomic}\}.
\]
Then $\nu( Y_{\na})>0$, and the following statements hold.
\begin{enumerate}[label=\textup{(\roman*)},leftmargin=2.2em]
\item
For every injective sequence $\mathbf{s}=(s_i)_{i\geq 1}\subset G$ and for  $\nu$-a.e. $y\in Y_{\na}$, there are
$\delta_{y,\mathbf{s}}>0$ and a dense Mycielski set\footnote{A subset $M$ of a metrizable space $X$ is called a \emph{Mycielski set}
if it can be written as a countable union of Cantor sets.}
$
 M_{y,\mathbf{s}}\subseteq\supp(\mu_y)
$
whose distinct points $x,x'$ satisfy
\[
 \liminf_{i\to\infty}\rho(s_i x,s_i x')=0,
 \qquad
 \limsup_{i\to\infty}\rho(s_i x,s_i x')>\delta_{y,\mathbf{s}}.
\]

\item
There exists a constant $\delta_*>0$, depending only on the
extension and on $\rho$, such that for every injective sequence
$\mathbf{s}=(s_i)_{i\geq1}\subseteq G$, there exists a measurable set
$
 Y_{\mathbf{s}}^*\subseteq Y_{\na},
$ with $
 \nu(Y_{\mathbf{s}}^*)>0,$
with the following property: for every $y\in Y_{\mathbf{s}}^*$, there is
a dense Mycielski set
$
 M_{y,\mathbf{s}}\subseteq\supp(\mu_y)
$
such that all distinct $x,x'\in M_{y,\mathbf{s}}$ satisfy
\[
 \liminf_{i\to\infty}\rho(s_i x,s_i x')=0,
 \qquad
 \limsup_{i\to\infty}\rho(s_i x,s_i x')>\delta_*.
\]
\end{enumerate}
\end{theorem}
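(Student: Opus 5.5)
The plan is to work on the relatively independent self-joining $\lambda:=\mu\times_Y\mu=\int_Y\mu_y\times\mu_y\,d\nu(y)$ and use relative mixing in its defining form: for all $f,g\in L^\infty(\mu)$,
\[
\int \E(f\cdot g\circ s\mid Y)-\E(f\mid Y)\,\E(g\circ s\mid Y)\Big|\,d\nu\to0
\qquad (s\to\infty).
\]

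\textbf{Step 1: $\nu(Y_{\na})>0$.} Suppose $\mu_y$ had atoms for a.e. $y$. Let $a(y)$ be the maximal atom mass of $\mu_y$; it is $G$-invariant. The set $A=\{x:\mu_{\pi x}(\{x\})=a(\pi x)\}$ of maximal atoms is then $G$-invariant with $\mu(A\mid Y)\geq a$. Relative mixing makes $\E(\one_A\mid Y)$ behave like a relatively weakly mixing (in fact idempotent) quantity. Concretely, $\one_A\circ s=\one_A$, so
\[
\E(\one_A\mid Y)=\E(\one_A\,\one_A\circ s\mid Y)\to\E(\one_A\mid Y)^2,
\]
forcing $\E(\one_A\mid Y)\in\{0,1\}$. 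Hence on a.e. fiber $\mu_y$ is carried by its maximal atoms; there are finitely many, of equal mass, so the extension is finite-to-one. A nontrivial finite extension is isometric (compact), contradicting relative weak mixing unless it is an isomorphism. Therefore $\nu(Y_{\na})>0$.

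\textbf{Step 2: almost-sure liminf/limsup on fiber products.} Fix an injective $\mathbf s$. For $\epsilon>0$, let $U_\epsilon=\{(x,x'):\rho(x,x')<\epsilon\}$, and let $V_\delta=\{\rho>\delta\}$. Relative mixing along $s_i\to\infty$ (injectivity gives $s_i\to\infty$), applied to indicators, gives
\[
\lambda\bigl((s_i\times s_i)^{-1}U_\epsilon\cap B\bigr)-\int_B \mu_y\times\mu_y(U_\epsilon)\,d\lambda\to 0
\]
for sets $B$ measurable in finitely many coordinates. I would use this to show that the sets
\[
\{\liminf_i\rho(s_ix,s_ix')<\epsilon\},\qquad \{\limsup_i\rho(s_ix,s_ix')>\delta\}
\]
have full $\lambda$-measure on $\{y:\mu_y\times\mu_y(U_\epsilon)>0\}$ and on $\{\mu_y\times\mu_y(V_\delta)>0\}$ respectively. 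The tool is a Borel--Cantelli/second-moment argument: pass to a subsequence $s_{i_k}$ along which correlations are nearly independent relative to $Y$. This is the main obstacle. Mixing gives only asymptotic pairwise decorrelation, whereas the event "infinitely often" needs a recursive choice: given finitely many earlier times, pick a later $s_{i_k}$ so that the conditional probability of hitting $U_\epsilon$ given the past sigma-algebra is close to $\mu_y\times\mu_y(U_\epsilon)$. I would obtain this by approximating the finite past by a finite partition and applying mixing to each cell, then use a Lévy 0-1/martingale argument to upgrade to probability one. Since the support of $\mu_y\times\mu_y$ meets the diagonal when $\mu_y$ is nonatomic, $\mu_y\times\mu_y(U_\epsilon)>0$ for every $\epsilon$. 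Intersecting over rational $\epsilon$ gives liminf $=0$ for $\lambda$-a.e. pair over $Y_{\na}$. For the limsup, put $\delta_{y}=\frac12\operatorname{diam}\supp\mu_y$, which is positive on $Y_{\na}$.

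\textbf{Step 3: Mycielski sets, and part (ii).} For a.e. $y\in Y_{\na}$, the relation
\[
R_y=\{(x,x'):\liminf=0,\ \limsup>\delta_{y,\mathbf s}\}
\]
is a $G_\delta$ in $\supp\mu_y\times\supp\mu_y$. It has full $\mu_y\times\mu_y$ measure, hence is dense. Mycielski's theorem, applied to the perfect space $\supp\mu_y$ (perfect since $\mu_y$ is nonatomic), yields a dense Mycielski set $M_{y,\mathbf s}$ whose distinct pairs lie in $R_y$. This proves (i). For (ii), choose $\delta_*$ independently of $\mathbf s$ so that $Y^*:=\{y\in Y_{\na}:\operatorname{diam}\supp\mu_y>2\delta_*\}$ has positive measure; this is possible because the diameter is positive on $Y_{\na}$. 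Then set $Y^*_{\mathbf s}=Y^*$ minus the null exceptional set from Step 2, and run the same argument with $V_{\delta_*}$, using $\mu_y\times\mu_y(V_{\delta_*})>0$ for $y\in Y^*$.
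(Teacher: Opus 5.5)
Your Step~1 is a valid alternative route; the paper instead gets the Dirac/nonatomic dichotomy from relative ergodicity of $\mu\times_Y\mu$ applied to the invariant diagonal. Step~3 is the paper's Mycielski argument. The genuine gap is in Step~2. It is not only that the Borel--Cantelli sketch is vague: the approach conditions on the wrong fiber.

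\textbf{The wrong fiber.} For $z$ over $y$, the pair $(s_i\times s_i)z$ lies over $s_iy$, so $\E(\one_U\circ(s_i\times s_i)\mid Y)(y)=(\mu_{s_iy}\otimes\mu_{s_iy})(U)$, not $(\mu_y\otimes\mu_y)(U)$. Already for $B=\widetilde\pi^{-1}(E)$ one has $\lambda((s_i\times s_i)^{-1}U\cap B)=\int_E(\mu_{s_iy}\otimes\mu_{s_iy})(U)\,d\nu$ exactly, with no mixing involved. So your displayed limit is false in general.

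\textbf{Close visits.} This part is repairable, but only with a \emph{uniform} bound. Cover $X$ by $r_m$ Borel sets of diameter $<1/m$; Cauchy--Schwarz then gives $(\mu_y\otimes\mu_y)(U_{1/m})\geq 1/r_m$ for every $y$. Mere positivity from ``the support meets the diagonal'' does not rule out $(\mu_{s_iy}\otimes\mu_{s_iy})(U_\epsilon)\to0$.

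\textbf{Separated visits.} Here the error is fatal. The relevant quantity $(\mu_{s_iy}\otimes\mu_{s_iy})(V_\delta)$ depends on where the orbit $s_iy$ goes, not on $\mu_y$. Concretely:
\begin{itemize}
\item Let $Y=\mathbb T$ with rotation by an irrational $\alpha$, and $X=Y\times\{0,1\}^{\mathbb Z}$ with Lebesgue times Bernoulli measure; this is relatively mixing over $Y$.
\item Use the compatible metric $\rho((y,z),(y',z'))=|y-y'|+\sum_n2^{-|n|}|\psi(y+n\alpha)z_n-\psi(y'+n\alpha)z'_n|$, where $\psi>0$ is continuous with a tall, narrow bump.
\item Then $\operatorname{diam}\supp\mu_y=\phi(y):=\sum_n2^{-|n|}\psi(y+n\alpha)$ is continuous with $\min\phi<\tfrac14\max\phi$.
\item If $n_i\alpha\to\beta$, every pair over $y$ satisfies $\limsup_i\rho(T^{n_i}x,T^{n_i}x')\leq\phi(y+\beta)$.
\end{itemize}
Choose $\beta$ carrying a maximizer of $\phi$ to a minimizer. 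Then your $R_y$ (with $\delta_y=\tfrac12\phi(y)$) is empty for every $y$ in an open set. Likewise, with $\delta_*=\tfrac14\max\phi$, conclusion (ii) fails on a positive-measure part of your $Y^*_{\mathbf{s}}$.

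\textbf{The missing ingredient} is the paper's reverse Fatou step.
\begin{itemize}
\item Fix $V_n=Y_{\na}\cap\{y:(\mu_y\otimes\mu_y)(B_n)>\tfrac12\}$ before $\mathbf{s}$ is given.
\item Once the averaging subsequence $(N_k)$ is produced, set $D_n=\{y:\limsup_k\frac1{N_k}\sum_{i\le N_k}\one_{V_n}(s_iy)>0\}$.
\item Invariance of $\nu$ and the reverse Fatou lemma give $\nu(D_n)\geq\nu(V_n)$. Invariance of $Y_{\na}$ gives $D_n\subseteq Y_{\na}$, so $\bigcup_nD_n=Y_{\na}$ mod $\nu$.
\item The scale in (i) is read off from $n(y)$ with $y\in D_{n(y)}$. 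In (ii) one takes $Y^*_{\mathbf{s}}\approx D_{n_*}$, a set that genuinely depends on $\mathbf{s}$; this is why only $\nu(Y^*_{\mathbf{s}})>0$ is asserted.
\end{itemize}

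Your conditional Borel--Cantelli scheme can be made rigorous. Relative mixing of the square gives $L^1$-approximate independence of $(s_i\times s_i)^{-1}U$ from $Y$ joined with a finite past partition. So you can choose $i_k$ with summable errors and apply L\'evy's conditional Borel--Cantelli lemma. But this yields ``infinitely often'' exactly on $\{y:\sum_k(\mu_{s_{i_k}y}\otimes\mu_{s_{i_k}y})(V_\delta)=\infty\}$. Showing this set has positive measure for (ii), and exhausts $Y_{\na}$ as $\delta\downarrow0$ for (i), requires the same $\limsup_k s_{i_k}^{-1}V_n$/reverse Fatou argument. The paper avoids martingales by using a Blum--Hanson $L^2$ bound along injective sequences together with a diagonal subsequence.

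A minor point: $\{\limsup_i\rho(s_ix,s_ix')>\delta\}$ is in general only $G_{\delta\sigma}$. Use ``$\rho(s_ix,s_ix')>\delta'$ for infinitely many $i$'' for a fixed $\delta'$, and then shrink the constant, as the paper does.
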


We briefly outline the proof strategy. Relative mixing passes to the
relatively independent square.  If $f$ has zero conditional expectation and
$(t_i)$ is injective, then
\[
 \left\|
 \frac{1}{N}\sum_{i=1}^N U_{t_i}f
 \right\|_2
 \longrightarrow0.
\]  
Applied simultaneously to neighborhoods of the diagonal and to their
complements, this produces infinitely many close and separated visits
on conditional fibers.
Relative mixing also forces each conditional measure to be either Dirac or
nonatomic.  A fixed positive separation scale can be selected on a
positive-measure family of nonatomic fibers before the prescribed sequence is
chosen, and Mycielski's theorem then produces the required Cantor sets.  Thus
the role played in \cite{HuangLiYe} by amenable conditional-entropy
additivity is replaced here by relative mixing and a direct Hilbert-space
averaging argument.

More generally, by passing to finite relatively independent powers, the same method yields higher-order scrambled Cantor sets along every prescribed injective sequence, with a separation constant depending only on the order, the extension, and the metric, and in particular independent of the prescribed sequence; see Subsection~\ref{subsec:higher-order}. Consequently, Theorems~\ref{thm:sofic} and~\ref{thm:rokhlin} admit corresponding higher-order extensions.

For Theorem~\ref{thm:sofic}, the Kerr--Li variational principle provides an
invariant measure of positive sofic measure entropy \cite{KerrLiVP}.  Hayes
proved that every sofic p.m.p. action is relatively mixing over its
 sofic Pinsker factor \cite[Theorem~3.4 (i)]{Hayes}.  Since positive
entropy makes this factor extension nontrivial,
Theorem~\ref{thm:abstract}~\textup{(ii)} applies.  

The relative-mixing criterion also yields a conclusion beyond the sofic
category.  Seward proved that a essentially free p.m.p. action with completely
positive outer Rokhlin entropy relative to a factor is relatively mixing
\cite[Corollary~5.2 (1)]{SewardKoopman}.  Applying this result to the outer
Rokhlin Pinsker factor gives the following theorem.

\begin{theorem}\label{thm:rokhlin}
Let $G$ be a countably infinite group and let
$G\curvearrowright(X,\rho)$ be a continuous action on a compact metrizable
space.  Suppose that the action preserves a Borel probability measure $\mu$
such that the p.m.p. action $G\curvearrowright(X,\mu)$ is essentially free and
$
 h_G^{\mathrm{Rok}}(X,\mu)>0.
$
Then there exists $\delta>0$ such that, for every sequence
$\mathbf{s}=(s_i)_{i\geq1}$ of pairwise distinct elements of $G$, there is a Cantor set
$K_{\mathbf{s}}\subseteq X$ for which every distinct $x,x'\in K_{\mathbf{s}}$ satisfy
\[
 \liminf_{i\to\infty}\rho(s_i x,s_i x')=0,
 \qquad
 \limsup_{i\to\infty}\rho(s_i x,s_i x')>\delta.
\]
The constant $\delta$ is independent of $\mathbf{s}$.
\end{theorem}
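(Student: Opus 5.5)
The plan is to deduce Theorem~\ref{thm:rokhlin} from Theorem~\ref{thm:abstract}~(ii), taking as the relatively mixing extension the factor map onto the outer Rokhlin Pinsker factor. Write $\Pi$ for the outer Rokhlin Pinsker $\sigma$-algebra of $G\curvearrowright(X,\mu)$, i.e.\ the largest $G$-invariant sub-$\sigma$-algebra relative to which the outer Rokhlin entropy vanishes, and let $\pi:(X,\mu,G)\to(Y,\nu,G)$ be a factor map realizing $\Pi$. Since $\mu$ is a Borel probability measure on a compact metrizable space, $L^2(\mu)$ is separable, so $\Pi$ is countably generated modulo null sets. By the standard Mackey/Varadarajan realization, $Y$ can be taken to be a standard Borel $G$-space, and $\pi$ can be taken to be $G$-equivariant on a conull invariant set. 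Only the measurable structure of $Y$ is used in Theorem~\ref{thm:abstract}: the disintegration $\mu=\int\mu_y\,d\nu$ and the fiber supports $\supp(\mu_y)\subseteq X$ live in the compact space $X$.

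The first key step is to check that the extension $X\to Y$ has completely positive outer Rokhlin entropy relative to $Y$, so that Seward's result \cite[Corollary~5.2 (1)]{SewardKoopman} gives relative mixing of $\pi$; the action is essentially free by hypothesis. I would obtain the complete positivity from Seward's relative Pinsker theory. Suppose some nontrivial intermediate factor $Y\leftarrow Z\leftarrow X$ had zero outer Rokhlin entropy relative to $Y$. By the subadditivity (Pinsker-type) formula for outer Rokhlin entropy, $Z$ would then have zero outer entropy, so $Z$ would be contained in $\Pi$, a contradiction. This is the step I expect to be the main obstacle. One has to cite precisely the form of the outer Rokhlin Pinsker factor (Seward's work on Krieger's finite generator theorem and Rokhlin entropy) in which ``the Pinsker factor is maximal and the extension over it is relatively CPE''. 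The point is exactly why the paper uses the \emph{outer} version, because that is the setting in which this maximality is known.

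The second step is nontriviality. We need $\pi$ not to be an isomorphism, i.e.\ $\Pi\neq\mathcal B_X$ mod $\mu$. If $\Pi$ were everything, the outer Rokhlin entropy of $X$ would vanish. For essentially free actions, outer entropy of the whole system coincides with $h^{\mathrm{Rok}}_G(X,\mu)$, and this is positive by hypothesis, a contradiction.

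Finally, Theorem~\ref{thm:abstract}~(ii) supplies a constant $\delta_*>0$ depending only on the extension and $\rho$, hence independent of $\mathbf s$. For each injective $\mathbf s$ it also supplies some $y\in Y^*_{\mathbf s}$ and a Mycielski set $M_{y,\mathbf s}$ whose distinct points satisfy the required $\liminf=0$ and $\limsup>\delta_*$. Since $\mu_y$ is nonatomic, $M_{y,\mathbf s}$ is nonempty and is a countable union of Cantor sets. Choose one of these Cantor sets as $K_{\mathbf s}$ and set $\delta=\delta_*$. The scrambling property passes to subsets, which finishes the argument.
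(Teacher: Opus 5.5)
Your proposal is correct and follows the paper's proof. The paper likewise takes the outer Rokhlin Pinsker factor and uses that $X$ is $\mathrm{CPE}^+$ relative to it (your subadditivity sketch unpacks the discussion the paper cites before \cite[Corollary~5.1]{SewardKoopman}); it gets relative mixing from \cite[Corollary~5.2 (1)]{SewardKoopman}, translated into Definition~\ref{def:relmix} via Remark~\ref{rem:set-formulation}, obtains nontriviality from $h^{\mathrm{Rok}}_G(X,\mu)>0$, and applies Theorem~\ref{thm:abstract}~(ii), where your extraction of a single Cantor set from the Mycielski set is left implicit.
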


\medskip

\noindent\textbf{The paper is organized as follows.}
Section~\ref{sec:pre} collects the measure-theoretic preliminaries.
In Section~\ref{sec:relative-mixing-tools}, we develop the structural and
averaging tools for relatively mixing extensions that are needed in the
proof of Theorem~\ref{thm:abstract}.
Section~\ref{sec:proof-abstract} combines these ingredients with a
fiberwise Mycielski argument to prove Theorem~\ref{thm:abstract} and also gives a higher-order version, together with a brief proof sketch.
Finally, Sections~\ref{sec:sofic} and~\ref{sec:rokhlin} apply the abstract
criterion to the sofic Pinsker factor and the outer Rokhlin
Pinsker factor, respectively, yielding Theorems~\ref{thm:sofic}
and~\ref{thm:rokhlin}.

\section{Preliminaries}\label{sec:pre}
Throughout this paper, $G$ denotes a countably infinite discrete group.
The notation $g\to\infty$ means that $g$ eventually lies outside every
finite subset of $G$. We write $\mathbb N$ for the set of natural numbers.
\subsection{Dynamical systems}

A \emph{$G$-system} is a compact metrizable space $X$ equipped with a
continuous action of $G$. Since $G$ is discrete, this is equivalent to an
action of $G$ on $X$ by homeomorphisms. We write $\Prob(X)$ for the space
of Borel probability measures on $X$ and $\M_G(X)$ for the set of
$G$-invariant Borel probability measures on $X$. When $G$ is amenable, $\M_G(X)$  is nonempty. For a general countable group, however,
$\M_G(X)$ may be empty.

A \emph{probability-measure-preserving $G$-system}, abbreviated as a
\emph{p.m.p.\ $G$-system}, is a standard probability space
$
(Z,m_Z)
$
equipped with a measurable action of $G$ such that
$
g_*m_Z=m_Z$
 for every $g\in G$.
In particular, if $(X,G)$ is a  $G$-system and
$\mu\in\M_G(X)$, then $(X,\mu,G)$ is a p.m.p.\ $G$-system.

Let
$
(Z,m_Z,G)
$ and $(W,m_W,G)
$
be p.m.p.\ $G$-systems. A measurable map
$
\pi:Z\to W
$
is called a \emph{measure-theoretic factor map} if
$
\pi_*m_Z=m_W
$
and, for every $g\in G$,
$
\pi(gz)=g\pi(z)
$
for $m_Z$-a.e. $z\in Z$. Since $G$ is countable, the
equivariance identities may be arranged to hold simultaneously for all
$g\in G$ on a single conull measurable subset of $Z$.

A measure-theoretic factor map
$
\pi:(Z,m_Z,G)\to(W,m_W,G)
$
is called a \emph{measure-theoretic isomorphism} if there exist invariant
conull Borel sets
$
Z_0\subseteq Z
$ and $W_0\subseteq W$
such that
$
\pi(Z_0)=W_0
$
and the restriction
$
\pi|_{Z_0}:Z_0\to W_0$
is a bimeasurable $G$-equivariant bijection.

Throughout the paper, measure-theoretic objects are understood modulo null
sets. In particular, if $A,B$ are Borel in a p.m.p.\ $G$-system
$(Z,m_Z,G)$, we write $A=B$ whenever
$
m_Z(A\mathbin{\triangle}B)=0.
$
Likewise, measurable functions and maps that agree almost everywhere are
identified, and measurable $\sigma$-algebras and factors are understood
modulo null sets. Conditional expectations and disintegrations are
understood up to their usual almost-everywhere uniqueness.
\subsection{Disintegration and relative products}
Let
$
\pi:(X,\mu,G)
\to
(Y,\nu,G)
$
be a measure-theoretic factor map. We write
$
\mu=\int_Y\mu_y\,d\nu(y)
$
for a disintegration of $\mu$ over $Y$. Thus, for every
$f\in L^1(X,\mu)$,
\[
\E^X_Y f(y)
:=
\int_X f\,d\mu_y
\]
defines a version of the conditional expectation of $f$ with respect to
$\pi$.

The \emph{relatively independent self-joining} of $\mu$ over $Y$ is
defined by
\[
\mu\times_Y\mu
:=
\int_Y\mu_y\otimes\mu_y\,d\nu(y).
\]
We denote this measure by
$
\lambda:=\mu\times_Y\mu.
$
The corresponding factor map
\[
\widetilde\pi:(X\times X,\lambda)\to(Y,\nu)
\]
is defined almost everywhere by
\[
\widetilde\pi(x,x')
=
\pi(x)
=
\pi(x').
\]
The group $G$ acts diagonally on $X\times X$ by
\[
g(x,x')=(gx,gx').
\]

For a p.m.p.\ $G$-system $(Z,m_Z,G)$, the \emph{Koopman representation} is
denoted by
\[
U_gf(z):=f(g^{-1}z),
\qquad
g\in G.
\]
We use the convention
\[
\langle \xi,\eta\rangle_{L^2(Z,m_Z)}
:=
\int_Z\xi\,\overline{\eta}\,dm_Z.
\]

\section{Relative mixing and injective-sequence averaging}
\label{sec:relative-mixing-tools}

This section collects the structural and averaging consequences of relative
mixing that will be used in the proof of Theorem~\ref{thm:abstract}. We prove
that relative mixing passes to the relatively independent square, establish
a relative Blum--Hanson type lemma along arbitrary injective sequences, and
derive a Dirac--nonatomic dichotomy for the conditional measures.

\begin{definition}\cite[Definition~3.3]{Hayes}\label{def:relmix}
The extension $
\pi:(X,\mu,G)
\to
(Y,\nu,G)
$ is \emph{relatively mixing} if, for every
$f,h\in L^\infty(X,\mu)$ satisfying $\E^X_Yf=\E^X_Yh=0$,
\[
 \bigl\|\E^X_Y(U_gf\,h)\bigr\|_{L^2(Y,\nu)}\longrightarrow0
 \qquad(g\to\infty).
\]
\end{definition}

Disintegrate $\mu$ over $\pi$ by
$
 \mu=\int_Y\mu_y\,d\nu(y).
$
We make the following standing version choice. Since $G$ is countable, after
modifying the factor map and the disintegration on null sets, there are
invariant conull Borel sets $X_0\subseteq X$ and
$Y_0\subseteq Y$ such that, for every $g\in G$,
\begin{equation}\label{eq:versions}
 \pi(gx)=g\pi(x)\quad(x\in X_0),
 \qquad
 \mu_{gy}=g_*\mu_y\quad(y\in Y_0),
\end{equation}
and, for every $y\in Y_0$, the measure $\mu_y$ is concentrated
on $X_0\cap\pi^{-1}(\{y\})$. 

\begin{remark}
\label{rem:set-formulation}
With the convention $gB=\{gx:x\in B\}$, Definition~\ref{def:relmix}
is equivalent to
\begin{equation}\label{eq:set-form}
     \int_Y
 \left|
 \mu_y(A\cap gB)-\mu_y(A)\mu_y(gB)
 \right|^2\,d\nu(y)
 \longrightarrow 0
 \qquad (g\to\infty)
\end{equation}
for all Borel sets $A,B\subseteq X$. Indeed,
\[
 U_g\one_B=\one_{gB},
 \qquad
 \E^X_Y(U_g\one_B)(y)
 =\mu_{g^{-1}y}(B)
 =\mu_y(gB),
\]
and hence the displayed integrand is the squared modulus of
\[
 \E^X_Y(U_g\one_B\,\one_A)
 -
 U_g^Y(\E_Y\one_B)\,\E_Y\one_A.
\]
The converse follows first for simple functions by linearity and then for
bounded functions by $L^2$-approximation and the contractivity of
conditional expectation. This is the set formulation used in
\cite[Section~5, immediately before Corollary~5.2]{SewardKoopman}.
\end{remark}
\subsection{Relative mixing of the relative square}
\label{subsec:relative-square}
Let
$
\pi:(X,\mu,G)
\to
(Y,\nu,G)
$ be a factor map, 
and denote relatively independent self-joining of $\mu$ over $Y$ by
$\lambda:=\mu\times_Y\mu$.
\begin{lemma}\label{lem:relative-square}
If $\pi:(X,\mu,G)\to (Y,\nu,G)$ is relatively mixing, then
$\widetilde\pi:(X\times X,\lambda,G)\to(Y,\nu,G)$ is relatively mixing.
\end{lemma}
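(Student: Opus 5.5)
We prove relative mixing of $\widetilde\pi$ by reducing to product test functions and then using the fiberwise product structure of $\lambda$. Because the conditional measures of $\lambda$ over $Y$ are $\mu_y\otimes\mu_y$, the conditional expectation of a product function is
\[
\E^{X\times X}_Y(f_1\otimes f_2)(y)=\E^X_Yf_1(y)\,\E^X_Yf_2(y).
\]
Also $U_g(f_1\otimes f_2)=U_gf_1\otimes U_gf_2$, since $G$ acts diagonally.

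\textbf{Step 1: reduction to products.} Let $F,H\in L^\infty(\lambda)$ with $\E_Y F=\E_Y H=0$. The bilinear map $(F,H)\mapsto \E_Y(U_gF\,H)$ satisfies
\[
\|\E_Y(U_gF\,H)\|_{L^2(\nu)}\le \|F\|_\infty\|H\|_{L^2(\lambda)}
\quad\text{and}\quad
\|\E_Y(U_gF\,H)\|_{L^2(\nu)}\le\|U_gF\|_{L^2(\lambda)}\|H\|_\infty,
\]
and both bounds are uniform in $g$. Using truncation, we may therefore approximate $F$ and $H$ in $L^2(\lambda)$ by uniformly bounded finite linear combinations of products $a\otimes b$, with $a,b\in L^\infty(\mu)$. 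We then subtract the conditional expectation to restore $\E_Y=0$. Since $\E_Y$ is an $L^2$ contraction and preserves $L^\infty$ bounds, this changes nothing essential. The subtracted term has the form $\E_Y(\cdot)\circ\widetilde\pi$, which is itself a finite sum of products $(c\circ\pi)\otimes 1$ with $c\in L^\infty(\nu)$. Hence it suffices to prove
\[
\|\E_Y(U_gF\,H)\|_{L^2(\nu)}\to0
\]
for $F,H$ finite sums of products $a\otimes b$ of bounded functions, centered so that $\E_YF=\E_YH=0$.

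\textbf{Step 2: decomposition of each factor.} Write each $a=a^0+a^1$, where $a^1=(\E^X_Ya)\circ\pi$ and $\E^X_Y a^0=0$, and do the same for $b$. A product $a\otimes b$ then splits into four pieces:
\[
a^0\otimes b^0,\qquad a^0\otimes b^1,\qquad a^1\otimes b^0,\qquad a^1\otimes b^1.
\]
The piece $a^1\otimes b^1$ equals $(\E a\,\E b)\circ\widetilde\pi$. Because $\E_YF=0$, the sum of these $Y$-measurable pieces over all terms of $F$ is zero, so only the first three types survive. The same holds for $H$.

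\textbf{Step 3: the cross terms.} We now compute $\E_Y(U_gF\,H)$ term by term, using the product formula for the conditional expectation; note that $U_g$ intertwines with $U^Y_g$ on $Y$-measurable functions. Take a typical term $U_g(a^0\otimes b^1)\cdot(c^0\otimes d^0)$. Its conditional expectation is
\[
\E_Y(U_ga^0\,c^0)\cdot U^Y_g(\E b)\cdot\E_Y d^0=0,
\]
because $\E_Yd^0=0$. In general, a term has a zero-mean factor in coordinate $j$ of $F$ but a $Y$-measurable factor in coordinate $j$ of $H$ (or vice versa). Then that coordinate contributes $\E_Y$ of a zero-mean function times something $Y$-measurable, and the whole term vanishes exactly. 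The terms that remain have both factors zero-mean in at least one coordinate $j$. Their conditional expectation is a product of bounded functions in which the $j$-th factor is $\E^X_Y(U_ga^0\,c^0)$ or its analogue.

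\textbf{Step 4: conclusion.} By Definition~\ref{def:relmix} applied to $\pi$, that $j$-th factor tends to $0$ in $L^2(\nu)$, while the other factor stays bounded in $L^\infty$. So each surviving term tends to $0$ in $L^2(\nu)$, and since there are finitely many terms, the lemma follows.

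The main obstacle is the bookkeeping in Steps 1--3: making sure that after centering, every cross term either vanishes identically or contains a factor $\E_Y(U_g a^0\, c^0)$ with both functions centered. Alternatively, one can avoid the cross terms entirely by working with the set form \eqref{eq:set-form} on rectangles $A_1\times A_2$. The key identity there is
\[
\lambda_y\bigl((A_1\times A_2)\cap g(B_1\times B_2)\bigr)=\mu_y(A_1\cap gB_1)\,\mu_y(A_2\cap gB_2).
\]
Writing $pq-p'q'=(p-p')q+p'(q-q')$ gives mixing on rectangles. It then extends to finite disjoint unions of rectangles, and to all Borel sets by approximation, with uniform-in-$g$ control from $|\lambda_y(\cdot)|\le1$.
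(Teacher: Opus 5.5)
Your proof is correct. Your main route differs from the paper's, while the ``alternative'' you sketch at the end is essentially the paper's proof.

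\textbf{The paper's route.} It verifies the set form \eqref{eq:set-form} on relative rectangles. There the identity $\lambda_y(A\cap gB)=\prod_i\mu_y(A_i\cap gB_i)$, together with the telescoping bound for products of numbers in $[0,1]$, gives the decay at once. It then appeals to density of the span of rectangle indicators. Because the covariance $\lambda_y(A\cap gB)-\lambda_y(A)\lambda_y(gB)$ is self-centering, no split into centered and $Y$-measurable parts is needed, which is why that proof is so short.

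\textbf{Your main route.} You work directly with Definition~\ref{def:relmix}. You split each tensor factor as $a=a^0+a^1$ and show that mixed coordinates (centered against $Y$-measurable) vanish identically. Since the $(1,1)$-pieces of $F$ cancel, every surviving term carries a factor $\E_Y(U_ga^0c^0)$ with both functions centered. The vanishing of mixed coordinates uses the pull-out property and the equivariance $\E_YU_g=U^Y_g\E_Y$. You need the latter, not only intertwining on $Y$-measurable functions, to know that $U_ga^0$ is again centered; you should state it.

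\textbf{What each buys.} Your approach does not depend on Remark~\ref{rem:set-formulation}; the paper uses both directions of that equivalence but only sketches the converse. Your approach also makes the approximation step explicit: bilinear bounds uniform in $g$, re-centering, and control of $L^\infty$ norms. The paper compresses all of this into one sentence about density. The price is the extra bookkeeping.

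\textbf{One precision.} Apply your truncation to rectangle-simple functions, i.e.\ functions constant on the cells of a finite partition of $X\times X$ into rectangles. Truncating a general finite sum of products $a\otimes b$ need not give another such sum.

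Both arguments extend directly to the $r$-fold relative powers used in Proposition~\ref{prop:higher-order-scrambled}.
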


\begin{proof}
Write
\[
 \lambda_y:=\mu_y\otimes\mu_y,\qquad \nu\text{-a.e. } y\in Y.
\]
By Remark~\ref{rem:set-formulation}, it is enough to verify the
set-theoretic mixing condition \eqref{eq:set-form}. Consider first relative rectangles
\[
 A=A_1\times A_2,
 \qquad\text{and}\qquad
 B=B_1\times B_2,
\]
where $A_i,B_i\subseteq X$ are Borel. Then for $\nu$-a.e. $y\in Y$,
\[
 \lambda_y(A\cap gB)
 =
 \prod_{i=1}^2\mu_y(A_i\cap gB_i),
\qquad
 \lambda_y(A)\lambda_y(gB)
 =
 \prod_{i=1}^2
 \mu_y(A_i)\mu_y(gB_i).
\]
Since all the factors take values in $[0,1]$,  it follows from the relative mixing of $\pi$ that
\begin{align*}
&
 \left\|
 \lambda_y(A\cap gB)
 -
 \lambda_y(A)\lambda_y(gB)
 \right\|_{L^2(Y,\nu)}
\\
&\quad\leq
 \sum_{i=1}^2
 \left\|
 \mu_y(A_i\cap gB_i)
 -
 \mu_y(A_i)\mu_y(gB_i)
 \right\|_{L^2(Y,\nu)}
 \to0, \qquad\text{as }g\to\infty.
\end{align*}
The proof is finished, as the linear span of the indicators of such relative rectangles is dense in
$L^2(X\times X,\lambda)$. 
\end{proof}
\subsection{Relative Blum--Hanson averaging}
\label{subsec:relative-blum-hanson}
The following lemma is a relative, countable-group version of the
classical Blum--Hanson subsequence theorem
\cite{BlumHanson1960}; see also \cite{BerendBergelson1986} for an
abstract Hilbert-space treatment of mixing sequences.

\begin{lemma}\label{lem:averaging}
Let 
$\pi:(X,\mu,G)
   \to
   (Y,\nu,G)$
be a relatively mixing factor map. 
Let $(t_i)_{i\geq1}$ be a sequence of pairwise distinct elements of
$G$.  If $f\in L^\infty(X,\mu)$ satisfies
$
\E_Yf=0,
$
then
\begin{equation}\label{eq:averaging}
 \left\|
 \frac1N\sum_{i=1}^N U_{t_i}f
 \right\|_{L^2(X,\mu)}
 \longrightarrow 0
 \qquad\text{as }N\to\infty.
\end{equation}
\end{lemma}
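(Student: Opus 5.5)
We expand the square of the norm and control the off-diagonal correlations using relative mixing. Put $S_N:=\frac1N\sum_{i=1}^N U_{t_i}f$. Then
\[
 \|S_N\|_2^2=\frac1{N^2}\sum_{i,j=1}^N\langle U_{t_i}f,U_{t_j}f\rangle
 =\frac1{N^2}\sum_{i,j=1}^N\int_X U_{t_j^{-1}t_i}f\,\overline f\,d\mu .
\]
The diagonal terms contribute at most $\|f\|_2^2/N\to0$.

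For the off-diagonal terms, use $\int_X F\,d\mu=\int_Y\E^X_Y F\,d\nu$ together with Cauchy--Schwarz on $(Y,\nu)$:
\[
 \Bigl|\langle U_gf,f\rangle\Bigr|\le\bigl\|\E^X_Y(U_gf\,\overline f)\bigr\|_{L^2(Y,\nu)}=:c(g).
\]
Since $\E^X_Y\overline f=\overline{\E^X_Yf}=0$, Definition~\ref{def:relmix} applies to the pair $f,\overline f$ and gives $c(g)\to0$ as $g\to\infty$. We also have the trivial bound $c(g)\le\|f\|_\infty^2$.

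The main point is that the elements $t_j^{-1}t_i$ with $i\ne j$ are not themselves pairwise distinct, so we cannot simply invoke decay of $c$ along one injective sequence. Instead, we count how often each small region occurs. Fix $\varepsilon>0$ and choose a finite set $F\subseteq G$ such that $c(g)<\varepsilon$ for $g\notin F$. For each fixed $i$ and each $h\in F$, injectivity of $(t_j)$ shows there is at most one $j$ with $t_j^{-1}t_i=h$, namely the $j$ with $t_j=t_ih^{-1}$. Consequently, for each $i$ there are at most $|F|$ indices $j$ with $t_j^{-1}t_i\in F$.

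Splitting the double sum accordingly gives
\[
 \|S_N\|_2^2\le\frac{\|f\|_2^2}{N}+\frac{N|F|\,\|f\|_\infty^2}{N^2}+\varepsilon .
\]
Letting $N\to\infty$ yields $\limsup_N\|S_N\|_2^2\le\varepsilon$. Since $\varepsilon>0$ was arbitrary, this proves \eqref{eq:averaging}.

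The only delicate step is the counting argument: it must use injectivity of $(t_j)$ for each fixed $i$, and that is exactly what is needed. Everything else is routine.
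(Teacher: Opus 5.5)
Your proposal is correct and follows essentially the same argument as the paper. You expand $\|S_N\|_2^2$ into matrix coefficients $\langle U_{t_j^{-1}t_i}f,f\rangle$, use relative mixing (applied to $f$ and $\overline f$) to make these small outside a finite set $F$, and use injectivity of $(t_i)$ to bound the number of pairs with $t_j^{-1}t_i\in F$ by $|F|N$. The remaining differences are cosmetic: you separate the diagonal terms, fix $i$ rather than $j$ in the count, and use the bound $\|f\|_\infty^2$ instead of $\|f\|_2^2$.
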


\begin{proof}
For $g\in G$, define the matrix coefficient
\[
 c(g)
 :=
 \langle U_gf,f\rangle_{L^2(X,\mu)}
 =
 \int_X U_gf\,\overline f\,d\mu.
\]
Since
$
\E^X_Y\overline f
=
\overline{\E^X_Yf}
=
0,
$
relative mixing, applied to $f$ and $\overline f$, gives
\begin{equation}\label{eq:coefficient-decay}
 |c(g)|
 =
 \left|
 \int_Y \E^X_Y(U_gf\,\overline f)\,d\nu
 \right| \leq
 \bigl\|
 \E^X_Y(U_gf\,\overline f)
 \bigr\|_{L^2(Y,\nu)}\to0
 \qquad\text{as }g\to\infty.
\end{equation}

Fix $\varepsilon>0$.  By \eqref{eq:coefficient-decay}, there exists
a finite set $L\subseteq G$ such that
\[
 |c(g)|<\varepsilon
 \qquad\text{for every }g\notin L.
\]
Note that
\[
 \left\|
 \frac1N\sum_{i=1}^N U_{t_i}f
 \right\|_{L^2(X,\mu)}^2
=
 \frac1{N^2}
 \sum_{i,j=1}^N
 \langle U_{t_j^{-1}t_i}f,f\rangle_{L^2(X,\mu)}
=
 \frac1{N^2}
 \sum_{i,j=1}^N c(t_j^{-1}t_i).
\]
For each fixed $j\in\{1,\ldots,N\}$ and each $\ell\in L$, the
equation
$
 t_j^{-1}t_i=\ell
$
has at most one solution $i$, since the elements $t_i$ are pairwise
distinct.  Therefore
\[
 \#\left\{
 (i,j)\in\{1,\ldots,N\}^2:
 t_j^{-1}t_i\in L
 \right\}
 \leq |L|N.
\]
Since
$
 |c(g)|
 \leq
 \|U_gf\|_{L^2(X,\mu)}\|f\|_{L^2(X,\mu)}
 =
 \|f\|_{L^2(X,\mu)}^2,
$
we conclude that
 \begin{align*}
 \left\|
 \frac1N\sum_{i=1}^N U_{t_i}f
 \right\|_{L^2(X,\mu)}^2
 &\leq
 \frac1{N^2}
 \sum_{\substack{1\leq i,j\leq N\\
                  t_j^{-1}t_i\notin L}}
 |c(t_j^{-1}t_i)|
 +
 \frac1{N^2}
 \sum_{\substack{1\leq i,j\leq N\\
                  t_j^{-1}t_i\in L}}
 |c(t_j^{-1}t_i)|
 \\
 &\leq
 \varepsilon
 +
 \frac{|L|}{N}\|f\|_{L^2(X,\mu)}^2.
 \end{align*}
Taking $\limsup_{N\to\infty}$ gives
\[
 \limsup_{N\to\infty}
 \left\|
 \frac1N\sum_{i=1}^N U_{t_i}f
 \right\|_{L^2(X,\mu)}^2
 \leq\varepsilon.
\]
Since $\varepsilon>0$ was arbitrary, \eqref{eq:averaging} follows.
\end{proof}
For the later application to the relative square, we need the same
subsequence of averaging lengths to work simultaneously for a
countable family of centered functions.  This follows from the
preceding $L^2$-convergence by a standard diagonal argument.
\begin{lemma}
\label{lem:simultaneous}
Assume the hypotheses of Lemma~\ref{lem:averaging}.  Let
$
f_1,f_2,\ldots\in L^\infty(X,\mu)
$
satisfy
$
\E_Yf_r=0$, for every $r\geq1$.
Then there exists a strictly increasing sequence of positive integers $\{N_k\}_{k\in\mb N}$
such that, simultaneously for every $r\geq1$,
\begin{equation}\label{eq:pointwise-subsequence}
 \lim_{k\to\infty}\frac1{N_k}\sum_{i=1}^{N_k}U_{t_i}f_r(x)
 =0\qquad\text{for $\mu$-a.e. $x\in X$.}
\end{equation}
\end{lemma}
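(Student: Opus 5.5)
The plan is to upgrade the $L^2$-convergence of Lemma~\ref{lem:averaging} to almost-everywhere convergence along a fast subsequence, one function at a time. A diagonal construction then makes a single subsequence work for all $r$. For each $r\geq1$ and $N\geq1$ put
\[
 A_{r,N}:=\frac1N\sum_{i=1}^N U_{t_i}f_r .
\]
Each $f_r$ lies in $L^\infty(X,\mu)$ and satisfies $\E_Yf_r=0$, so Lemma~\ref{lem:averaging} gives $\|A_{r,N}\|_{L^2(X,\mu)}\to0$ as $N\to\infty$, for every fixed $r$.

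We choose $N_1<N_2<\cdots$ inductively. Having chosen $N_{k-1}$, Lemma~\ref{lem:averaging} applied to the finitely many functions $f_1,\dots,f_k$ gives some $N_k>N_{k-1}$ with
\[
 \|A_{r,N_k}\|_{L^2(X,\mu)}\leq 2^{-k}\qquad\text{for all }1\leq r\leq k .
\]
Now fix $r$. For $k\geq r$ we have $\|A_{r,N_k}\|_2^2\leq 4^{-k}$, so
\[
 \int_X\sum_{k\geq r}|A_{r,N_k}|^2\,d\mu=\sum_{k\geq r}\|A_{r,N_k}\|_2^2<\infty
\]
by monotone convergence. Hence $\sum_{k}|A_{r,N_k}(x)|^2<\infty$ for $\mu$-a.e.\ $x$. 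In particular $A_{r,N_k}(x)\to0$ on a conull set $X_r$. (Equivalently, one can apply Chebyshev and Borel--Cantelli to the events $\{|A_{r,N_k}|>2^{-k/2}\}$, whose measures are at most $2^{-k}$.)

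Finally, $\bigcap_{r}X_r$ is a countable intersection of conull sets, so it is conull. On it, \eqref{eq:pointwise-subsequence} holds simultaneously for every $r$.

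I expect no real obstacle. The only point needing care is that the choice of $N_k$ must handle the first $k$ functions at once, rather than building a separate subsequence for each $r$; this ordering is what makes the diagonalization produce one subsequence for all $r$. A minor technicality is that each $A_{r,N}$ is an $L^\infty$ class defined only up to null sets. Since only countably many functions are involved, we may fix representatives, and the resulting countably many null sets do not affect the conclusion.
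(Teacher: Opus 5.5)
Your proposal is correct and follows essentially the same argument as the paper. You choose $N_k$ inductively so that the first $k$ averages are small in $L^2$, use summability of the squared norms together with monotone convergence (the paper invokes Tonelli) to get a.e.\ convergence for each $r$, and then intersect countably many conull sets.
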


\begin{proof}
For $N,r\geq1$, write
\[
 A_{N,r}
 :=
 \frac1N\sum_{i=1}^N U_{t_i}f_r.
\]
By Lemma~\ref{lem:averaging}, for every fixed $r$,
\[
 \|A_{N,r}\|_{L^2(X,\mu)}
 \longrightarrow0
 \qquad\text{as }N\to\infty.
\]

We now choose $N_k$ inductively.  Having chosen $N_{k-1}$, the
$L^2$-convergence above, applied to the finite family
$f_1,\ldots,f_k$, allows us to choose
$
 N_k>N_{k-1}
$
so large that
\begin{equation}\label{eq:diagonal-L2-bound}
 \|A_{N_k,r}\|_{L^2(X,\mu)}^2<2^{-k}
 \qquad(1\leq r\leq k).
\end{equation}

Fix $r\geq1$.  By \eqref{eq:diagonal-L2-bound},
\[
 \sum_{k=r}^{\infty}
 \|A_{N_k,r}\|_{L^2(X,\mu)}^2
 \leq
 \sum_{k=r}^{\infty}2^{-k}
 <\infty.
\]
Since the summands are nonnegative, Tonelli's theorem gives
\[
 \begin{aligned}
 \int_X
 \sum_{k=r}^{\infty}
 |A_{N_k,r}(x)|^2\,d\mu(x)
 &=
 \sum_{k=r}^{\infty}
 \int_X|A_{N_k,r}(x)|^2\,d\mu(x)\\
 &=
 \sum_{k=r}^{\infty}
 \|A_{N_k,r}\|_{L^2(X,\mu)}^2
 <\infty.
 \end{aligned}
\]
Consequently,
\[
 \sum_{k=r}^{\infty}|A_{N_k,r}(x)|^2<\infty \qquad\text{for $\mu$-a.e. $x\in X$.}
\]
  In particular,
\[
\lim_{k\to\infty} A_{N_k,r}(x)=0 \qquad\text{for $\mu$-a.e. $x\in X$.}
\]
For each $r$, let $X_r\subseteq X$ be a conull set on which this
convergence holds, and set
$
 X_0:=\bigcap_{r=1}^{\infty}X_r.
$
As the intersection is countable,
$
 \mu(X_0)=1.
$
For every $x\in X_0$, the convergence holds simultaneously for all
$r\geq1$.
\end{proof}

\subsection{The atomic structure of the conditional measures}
\label{subsec:conditional-measures}

Relative (weak) mixing also imposes a rigid dichotomy on the conditional
measures of the extension.  Namely, an atomic conditional measure must
in fact be a Dirac measure.  This observation identifies precisely when
a relatively mixing extension has nontrivial nonatomic fibers.

\begin{lemma}\label{lem:dichotomy}
Let
$
\pi:(X,\mu,G)\to(Y,\nu,G)
$
be a relatively  mixing factor map\footnote{In fact, the proof only requires
the extension $\pi:(X,\mu,G)\to(Y,\nu,G)$ to be relatively weakly mixing;
see \cite[Chapter~9, Section~5]{GlasnerJoinings} for definition of relatively weakly mixing.}. Then, for $\nu$-a.e. $y\in Y$,
the conditional measure $\mu_y$ is either a Dirac measure or nonatomic.
Moreover, 
$
\nu(Y_{\na})=0$
 if and only if
$\pi
$
is an isomorphism, where $Y_{\na}:=\{y\in Y:\mu_y\text{ is nonatomic}\}$.
\end{lemma}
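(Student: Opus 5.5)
We will work on the relatively independent square $\lambda=\mu\times_Y\mu$, which is relatively mixing over $Y$ by Lemma~\ref{lem:relative-square}. Let $\Delta=\{(x,x):x\in X\}\subseteq X\times X$ be the diagonal, a closed $G$-invariant set, and put
\[
 F(y):=\lambda_y(\Delta)=(\mu_y\otimes\mu_y)(\Delta)=\sum_{a}\mu_y(\{a\})^2 .
\]
The sum runs over the atoms of $\mu_y$. Since $\Delta$ is invariant and $\mu_{gy}=g_*\mu_y$ on $Y_0$ by \eqref{eq:versions}, we have $F(gy)=F(y)$.

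\textbf{Dichotomy.} We apply the set formulation \eqref{eq:set-form} for $\widetilde\pi$ with $A=B=\Delta$. Since $g\Delta=\Delta$, this gives, for every $g$,
\[
 \int_Y\bigl|\lambda_y(\Delta)-\lambda_y(\Delta)^2\bigr|^2\,d\nu(y)
 =\int_Y\bigl|\lambda_y(\Delta\cap g\Delta)-\lambda_y(\Delta)\lambda_y(g\Delta)\bigr|^2\,d\nu(y).
\]
The right-hand side tends to $0$ as $g\to\infty$, while the left-hand side does not depend on $g$. Hence $F=F^2$ $\nu$-a.e., so $F(y)\in\{0,1\}$ for a.e. $y$. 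If $F(y)=0$, then $\mu_y$ has no atoms. If $F(y)=1$, then $\sum_a\mu_y(\{a\})^2=1$ together with $\sum_a\mu_y(\{a\})\le1$ forces a single atom of mass $1$, so $\mu_y$ is a Dirac measure. Measurability of $Y_{\na}=\{F=0\}$ follows because $y\mapsto\lambda_y(\Delta)$ is measurable, $\Delta$ being Borel.

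The only delicate point is the hypothesis in the footnote. If only relative weak mixing is assumed, we would replace the argument above as follows. Relative weak mixing means the relative square is relatively ergodic over $Y$, i.e. every $\widetilde\pi$-invariant set is $Y$-measurable. Consider an invariant set built from atoms of maximal mass; conditionally this set must have $\lambda_y$-measure $0$ or $1$, and this again forces $\lambda_y(\Delta)\in\{0,1\}$. We will not need this variant, so we present the mixing version, which uses only Remark~\ref{rem:set-formulation} and Lemma~\ref{lem:relative-square}.

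\textbf{Isomorphism criterion.} Suppose $\nu(Y_{\na})=0$. Then $\mu_y=\delta_{\phi(y)}$ for a.e. $y$, and $\phi$ is measurable as the map $y\mapsto$ barycenter of $\mu_y$, or via $\phi(y)=$ the unique point of $\supp\mu_y$. Then $\mu=\int\delta_{\phi(y)}\,d\nu$, so $\phi\circ\pi=\mathrm{id}$ $\mu$-a.e., because $\mu_y$ is concentrated on $\pi^{-1}(y)$. Equivariance of $\phi$ follows from $\mu_{gy}=g_*\mu_y$. Restricting to invariant conull sets, e.g. $\bigcap_g g(\cdot)$ of the conull set where $\phi\circ\pi=\mathrm{id}$, shows that $\pi$ is a measure-theoretic isomorphism; bimeasurability follows from Lusin--Souslin, since $\pi$ is an injective Borel map on a standard Borel set.

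Conversely, suppose $\pi$ is an isomorphism. Then the disintegration is $\mu_y=\delta_{\pi^{-1}(y)}$ a.e., so $\nu(Y_{\na})=0$.

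The main care needed is in this last bookkeeping: choosing invariant conull sets so that the inverse map is defined and equivariant everywhere on them.
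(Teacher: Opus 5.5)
Your proof is correct. For the isomorphism criterion it is essentially the paper's argument. One small fix there: a ``barycenter'' of $\mu_y$ only makes sense after embedding $X$ in a convex space. It is cleaner to say $f(\phi(y))=\int f\,d\mu_y$ for continuous $f$, or, as the paper does, that $x\mapsto\delta_x$ is a Borel isomorphism onto its image in $\Prob(X)$.

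The genuine difference is in the dichotomy step. The paper argues as follows:
\begin{itemize}
\item it passes from relative mixing to relative weak mixing;
\item it cites the standard characterization (Glasner) that the relatively independent square is then relatively ergodic over $Y$;
\item since $\Delta_X$ is invariant, $\one_{\Delta_X}$ is $Y$-measurable, which forces $(\mu_y\otimes\mu_y)(\Delta_X)\in\{0,1\}$.
\end{itemize}
You instead apply the set form of relative mixing on the square (Remark~\ref{rem:set-formulation} with Lemma~\ref{lem:relative-square}) to $A=B=\Delta$. The integrand $|F-F^2|^2$ does not depend on $g$ yet must tend to $0$, so $F=F^2$.

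In effect you prove the implication ``relative mixing $\Rightarrow$ relative ergodicity of the square'' directly, for the one invariant set that is needed. This keeps your argument self-contained within the paper. It also sidesteps any question of how relative weak mixing is formulated when $\mu$ is not ergodic. The paper's route costs an external citation but gives the stronger footnoted statement, where only relative weak mixing is assumed.

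For that footnoted variant, your ``atoms of maximal mass'' sketch is vague and unnecessary. Under relative ergodicity of the square, the diagonal itself is the invariant set to use, and that is exactly the paper's proof.
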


\begin{proof}
We use the standard characterization of relative weak mixing by relative
ergodicity of the relatively independent self-joining; see, for example,
\cite[Chapter~9, Section~5]{GlasnerJoinings}. Since relative mixing implies
relative weak mixing, the extension
$
(X\times X,\mu\times_Y\mu,G)\to(Y,\nu,G)
$
is relatively ergodic.

Put
\[
d(y):=(\mu_y\otimes\mu_y)(\Delta_X)
     =\E_Y^{X\times X}(\one_{\Delta_X})(y),
\]
where $
 \Delta_X:=\{(x,x):x\in X\}\subseteq X\times X.
$
Since $\Delta_X$ is invariant under the diagonal action, relative
ergodicity implies
\[
\one_{\Delta_X}=d\circ\widetilde\pi
\qquad
(\mu\times_Y\mu)\text{-a.e}.
\]
Since $\one_{\Delta_X}$ takes only the values $0$ and $1$, it follows that
\[
d(y)\in\{0,1\}
\qquad
\text{for }\nu\text{-a.e. }y.
\]
This
implies the asserted dichotomy.

Choose an equivariant version of the disintegration such that
\[
\mu_{gy}=g_*\mu_y,
\qquad
g\in G
\]
on a fixed invariant conull subset of $Y$. The property of being
nonatomic is preserved by pushforward under a bijection. Therefore
$Y_{\na}$ is $G$-invariant.

Suppose that
$
\nu(Y_{\na})=0.
$
Then $\mu_y$ is a Dirac measure for $\nu$-a.e. $y$. Since the map
$
y\mapsto\mu_y
$
is measurable and the Dirac embedding
\[
X\longrightarrow\Prob(X),
\qquad
x\longmapsto\delta_x,
\]
is a Borel isomorphism onto its image, there exists a measurable map
$
\xi:Y\to X
$
such that
\[
\mu_y=\delta_{\xi(y)}\qquad\text{for $\nu$-a.e. $y$.}
\]
The support property of the disintegration gives
\[
\pi(\xi(y))=y
\qquad
\text{for }\nu\text{-a.e. }y.
\]
Moreover,
\[
\mu\bigl(\{x\in X:\xi(\pi(x))=x\}\bigr)
=
\int_Y
\delta_{\xi(y)}
\bigl(\{x\in X:\xi(\pi(x))=x\}\bigr)
\,d\nu(y)
=
1.
\]
Thus
\[
\pi\circ\xi=\id_Y
\pmod\nu,
\qquad
\xi\circ\pi=\id_X
\pmod\mu.
\]
Finally,
\[
\delta_{\xi(gy)}
=
\mu_{gy}
=
g_*\mu_y
=
g_*\delta_{\xi(y)}
=
\delta_{g\xi(y)}
\]
for every $g\in G$ and $\nu$-a.e. $y$. Hence
 $\xi$ is an equivariant inverse of $\pi$.

Conversely, if $\pi$ is an isomorphism, then its
conditional measures are Dirac almost everywhere. Therefore
$
\nu(Y_{\na})=0.
$
\end{proof}
\section{Proof of Theorem~\ref{thm:abstract} and  higher-order extensions}
\label{sec:proof-abstract}
Throughout this section, let
$(X,G)$ be a $G$-system equipped with a compatible metric $\rho$, and
let
$
\pi:(X,\mu,G)\to(Y,\nu,G)
$
be a relatively  mixing factor map. Let
$
\lambda
:=
\mu\times_Y\mu
$
be the relatively independent self-joining, and 
$
\widetilde\pi:(X\times X,\lambda)\to(Y,\nu)
$
be the associated factor map. Since the extension is not an isomorphism, Lemma 3.6 gives $\nu(Y_{\mr{na}})>0.$

The proof of Theorem~\ref{thm:abstract} has two main steps. We first establish
fiberwise close and separated visits along the prescribed injective sequence,
and then apply Mycielski's theorem to the conditional supports. In the final
subsection, we formulate a higher-order version and provide a brief proof
sketch.
\subsection{Fiberwise close and separated visits}
\label{subsec:fiberwise-visits}
We first show that, along the prescribed injective sequence, almost every
pair in almost every nonatomic conditional fiber visits arbitrarily small
neighborhoods of the diagonal and also visits a fixed complement of the
diagonal infinitely often. 

 For $m\geq1$, put
\[
A_m
:=
\{(x,x')\in X\times X:\rho(x,x')<1/m\}
\]
and
\[
p_m(y)
:=
(\mu_y\otimes\mu_y)(A_m).
\]
Choose a finite Borel partition
\[
X=C_{m,1}\sqcup\cdots\sqcup C_{m,r_m}
\]
whose atoms have diameter less than $1/m$. Then by Cauchy--Schwarz inequality,
\begin{equation}\label{eq:uniform-close-mass}
p_m(y)
\geq
\sum_{j=1}^{r_m}\mu_y(C_{m,j})^2
\geq
\frac{1}{r_m}
=:c_m>0\qquad\text{for every $y\in Y$}.
\end{equation}

For $n\geq1$, put
\[
B_n
:=
\{(x,x')\in X\times X:\rho(x,x')>1/n\}
\]
and
\[
q_n(y)
:=
(\mu_y\otimes\mu_y)(B_n).
\]
For every $y\in Y_{\na}$, nonatomicity of $\mu_y$ gives
\[
q_n(y)
\uparrow
(\mu_y\otimes\mu_y)
\bigl((X\times X)\setminus\Delta_X\bigr)
=
1.
\]
Define
\[
V_n
:=
Y_{\na}\cap\{y\in Y:q_n(y)>1/2\}.
\]
Then $(V_n)_{n\geq1}$ is increasing and
\begin{equation}\label{eq:Vn-union}
\bigcup_{n\geq1}V_n
=
Y_{\na}\pmod \nu.
\end{equation}

We now fix a sequence $(s_i)_{i\geq1}$ of pairwise distinct elements of $G$.
On the relative square, define
\[
F_m
:=
\one_{A_m}-p_m\circ\widetilde\pi,
\qquad
G_n
:=
\one_{B_n}-q_n\circ\widetilde\pi,\qquad\text{for } m,n\in\mb N.
\]
These are bounded functions with zero conditional expectation over $Y$.

Apply Lemma~\ref{lem:simultaneous} to the injective sequence
$
t_i:=s_i^{-1}
$
and to the countable family $\{F_m\}_{m\in\mb N}$ and $\{G_n\}_{n\in\mb N}.$
There exist integers $N_k\to\infty$ and a measurable set
$
\Omega\subseteq X\times X,$ with $\lambda(\Omega)=1$
such that, for every $z\in\Omega$, writing
$
y:=\widetilde\pi(z),
$
one has
\begin{align}
\lim_{k\to\infty}\frac{1}{N_k}\sum_{i=1}^{N_k}
\bigl(
\one_{A_m}(s_i z)-p_m(s_i y)
\bigr)
&=0
&&\text{for every }m,
\label{eq:close-average}\\
\lim_{k\to\infty}\frac{1}{N_k}\sum_{i=1}^{N_k}
\bigl(
\one_{B_n}(s_i z)-q_n(s_i y)
\bigr)
&=0
&&\text{for every }n.
\label{eq:far-average}
\end{align}
Here we have also intersected $\Omega$ with the fixed invariant conull set
on which
$
\widetilde\pi(s_i z)=s_i\widetilde\pi(z)
$
holds for every $i$.

By \eqref{eq:uniform-close-mass} and \eqref{eq:close-average}, for every
$m\geq1$ and every $z\in\Omega$,
\[
\liminf_{k\to\infty}
\frac{1}{N_k}\sum_{i=1}^{N_k}\one_{A_m}(s_i z)
\geq
c_m>0.
\]
Hence, for every $m\geq1$ and for
$\lambda$-a.e. $(x,x')$,
\begin{equation}\label{eq:close-infinitely-often}
\#\bigl\{i\geq 1:
\rho(s_i x,s_i x')<1/m\bigr\}
=
\infty.
\end{equation}

We next obtain separated visits. For $n,k\geq1$, define
\[
r_{k,n}(y)
:=
\frac{1}{N_k}\sum_{i=1}^{N_k}\one_{V_n}(s_i y)
\]
and
\[
D_n
:=
\left\{
y\in Y:
\limsup_{k\to\infty}r_{k,n}(y)>0
\right\}.
\]
Since $\nu$ is $G$-invariant,
$
\int_Y r_{k,n}\,d\nu
=
\nu(V_n)
$
for every $k\in\mb N$. The reverse Fatou's lemma   gives
$$
\int_Y\limsup_{k\to\infty}r_{k,n}\,d\nu
\geq
\nu(V_n).
$$
Since
$
0\leq\limsup_{k\to\infty}r_{k,n}\leq\one_{D_n},
$
we obtain
\begin{equation}\label{eq:Dn-lower}
\nu(D_n)\geq\nu(V_n).
\end{equation}

The sets $D_n$ are increasing as $n\to\infty$. Moreover, since $Y_{\na}$ is
$G$-invariant and $V_n\subseteq Y_{\na}$, one has
$
D_n\subseteq Y_{\na}
$.
Together with \eqref{eq:Vn-union} and \eqref{eq:Dn-lower}, this implies
\begin{equation}\label{eq:Dn-union}
\bigcup_{n\geq1}D_n
=
Y_{\na}\pmod \nu.
\end{equation}

Disintegrating the identity $\lambda(\Omega)=1$, choose a conull measurable
set $Y_0\subseteq Y$ such that
\[
(\mu_y\otimes\mu_y)(\Omega)=1
\qquad
\text{for every }y\in Y_0.
\]
Set
\begin{equation*}
    Y_{(s_i)}
:=
Y_0\cap Y_{\na}\cap\bigcup_{n\geq1}D_n.
\end{equation*}
By \eqref{eq:Dn-union}, the set $Y_{(s_i)}=Y_{\na}\pmod \nu$.

Fix $y\in Y_{(s_i)}$, and choose an integer $n(y)\geq1$ such that
$
y\in D_{n(y)}.
$
Since
$
q_{n(y)}
\geq
\frac{1}{2}\one_{V_{n(y)}},
$
we have
\[
\limsup_{k\to\infty}
\frac{1}{N_k}\sum_{i=1}^{N_k}q_{n(y)}(s_i y)
\geq
\frac{1}{2}
\limsup_{k\to\infty}r_{k,n(y)}(y)
>
0.
\]
Combining this with \eqref{eq:far-average}, we obtain, for
$(\mu_y\otimes\mu_y)$-a.e. $z=(x,x')$,
\[
\limsup_{k\to\infty}
\frac{1}{N_k}\sum_{i=1}^{N_k}
\one_{B_{n(y)}}(s_i z)
>
0.
\]
Therefore, for $\mu_y\otimes\mu_y$-a.e.\ $(x,x')\in X\times X$,
\begin{equation}\label{eq:far-infinitely-often}
\#\left\{
i\geq 1:
\rho(s_i x,s_i x')>\frac{1}{n(y)}
\right\}
=
\infty.
\end{equation}

\subsection{The Mycielski construction}
\label{subsec:mycielski}
We now convert the preceding full conditional-measure relations into
topologically large scrambled sets and complete the proof of
Theorem~\ref{thm:abstract}.

Fix $y\in Y_{(s_i)}$, and put
\[
X_y:=\supp(\mu_y)
\qquad\text{and}\qquad
\delta_y:=\frac{1}{2n(y)}.
\]
Define a relation $R_y\subseteq X_y\times X_y$ by
\begin{align*}
R_y
:=&
\bigcap_{m=1}^{\infty}
\bigcap_{L=1}^{\infty}
\bigcup_{i\geq L}
\left\{
(x,x')\in X_y^2:
\rho(s_i x,s_i x')<\frac{1}{m}
\right\}
\\
&\cap
\bigcap_{L=1}^{\infty}
\bigcup_{i\geq L}
\left\{
(x,x')\in X_y^2:
\rho(s_i x,s_i x')>\frac{1}{n(y)}
\right\}.
\end{align*}
By continuity of the action, $R_y$ is a  $G_\delta$ subset of
$X_y^2$. Equations \eqref{eq:close-infinitely-often} and
\eqref{eq:far-infinitely-often} give
\[
(\mu_y\otimes\mu_y)(R_y)=1.
\]
Since $\mu_y$ has full support on $X_y$, every nonempty relatively open
subset of $X_y^2$ has positive $\mu_y\otimes\mu_y$ measure. Hence $R_y$ is
dense in $X_y^2$.

The compact metrizable space $X_y$ is perfect. Indeed, if $x\in X_y$ were
isolated in $X_y$, then there would be an open set $U\subseteq X$ such that
$
U\cap X_y=\{x\}.
$
Since $x\in\supp(\mu_y)$, one would have
$
\mu_y(\{x\})=\mu_y(U)>0,
$
contradicting the nonatomicity of $\mu_y$.

By Mycielski's theorem
\cite[Theorem~1]{Mycielski}, there exists a dense Mycielski set
$
M_y\subseteq X_y
$
such that
$
(M_y\times M_y)\setminus\Delta_X
\subseteq
R_y.
$
Thus every distinct $x,x'\in M_y$ satisfies
\[
\liminf_{i\to\infty}\rho(s_i x,s_i x')=0,
\qquad
\limsup_{i\to\infty}\rho(s_i x,s_i x')
\geq
\frac{1}{n(y)}
>
\delta_y.
\]
This proves Theorem~\ref{thm:abstract}~\textup{(i)}.

\medskip
We now prove Theorem \ref{thm:abstract}~(ii). Since $\nu(Y_{\na})>0$ and $V_n\uparrow Y_{\na},$ choose, before any injective sequence is specified,
an integer $n_*\geq1$ such that
$
\nu(V_{n_*})>0,
$
and set
$
\delta_*:=\frac{1}{2n_*}.
$
Notice that $V_n$, and hence the choice of $n_*$, depends only on the
extension and on the metric $\rho$, and not on any prescribed sequence.

Now let $(s_i)_{i\geq1}$ be an arbitrary sequence of pairwise distinct
elements of $G$. Apply the construction of
Subsection~\ref{subsec:fiberwise-visits} to this sequence. It gives a
conull set $Y_0\subseteq Y$ and measurable sets $D_n\subseteq Y$ satisfying
\[
\nu(D_n)\geq\nu(V_n)
\qquad
\text{for every }n\geq1.
\]
In particular,
\[
\nu(D_{n_*})
\geq
\nu(V_{n_*})
>
0.
\]
Therefore,
\[
\nu\bigl(Y_0\cap D_{n_*}\cap Y_{\na}\bigr)>0.
\]

For any fix $y\in  Y_0\cap D_{n_*}\cap Y_{\mathrm{na}}$, the preceding argument applies with
$n(y)=n_*$.
Repeating
the preceding Mycielski construction gives a dense Mycielski set
$
M_y\subseteq\supp(\mu_y)
$
such that every distinct $x,x'\in M_y$ satisfy
\[
\liminf_{i\to\infty}\rho(s_i x,s_i x')=0
\qquad\text{and}\qquad
\limsup_{i\to\infty}\rho(s_i x,s_i x')
\geq
\frac{1}{n_*}
>
\delta_*.
\]
Since $n_*$ was chosen before $(s_i)$ was fixed, this constant is
independent of the prescribed sequence. This proves
Theorem~\ref{thm:abstract} \textup{(ii)}.
\subsection{Higher-order scrambled sets}
\label{subsec:higher-order}
We conclude this section by recording a higher-order consequence of the
preceding argument.

\begin{proposition}
\label{prop:higher-order-scrambled}
Fix an integer $r\geq 2$. Then there exists a constant
$
\delta_r>0,
$
depending only on the extension
$
\pi:(X,\mu,G)\to(Y,\nu,G),
$
the compatible metric $\rho$, and $r$, such that the following holds.
For every injective sequence
$\mathbf{s}=(s_i)_{i\geq 1}$
in $G$, there exists a Cantor set
$
K_{r,\mathbf{s}}\subseteq X
$
such that, for every $r$-tuple of pairwise distinct points
$
x_1,\ldots,x_r\in K_{r,\mathbf{s}},
$
one has
\[
\liminf_{i\to\infty}
\max_{1\leq a<b\leq r}
\rho(s_i x_a,s_i x_b)
=
0
\quad\text{and}\quad
\limsup_{i\to\infty}
\min_{1\leq a<b\leq r}
\rho(s_i x_a,s_i x_b)
>
\delta_r.
\]
\end{proposition}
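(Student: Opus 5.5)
We mimic the proof of Theorem~\ref{thm:abstract}, replacing the relative square by the relatively independent $r$-fold power
\[
\lambda^{(r)}:=\int_Y\mu_y^{\otimes r}\,d\nu(y)
\]
on $X^r$, with factor map $\widetilde\pi_r:X^r\to Y$.

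\textbf{Step 1: relative mixing of the power.} First, $\widetilde\pi_r$ is relatively mixing. The proof of Lemma~\ref{lem:relative-square} carries over verbatim. For relative rectangles $A=\prod A_a$ and $B=\prod B_a$, the difference $\prod_a\mu_y(A_a\cap gB_a)-\prod_a\mu_y(A_a)\mu_y(gB_a)$ telescopes into $r$ terms. Each term is bounded in $L^2(\nu)$ by a single-coordinate mixing defect. Density of rectangles then gives the general case. Consequently, Lemmas~\ref{lem:averaging} and~\ref{lem:simultaneous} apply on $(X^r,\lambda^{(r)})$.

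\textbf{Step 2: the close and far sets.} Put
\[
A^{(r)}_m:=\{\max_{a<b}\rho(x_a,x_b)<1/m\},\qquad B^{(r)}_n:=\{\min_{a<b}\rho(x_a,x_b)>1/n\}.
\]
Write $p^{(r)}_m(y)=\mu_y^{\otimes r}(A^{(r)}_m)$ and $q^{(r)}_n(y)=\mu_y^{\otimes r}(B^{(r)}_n)$.

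For the close set, take a partition $C_{m,1},\dots,C_{m,r_m}$ with atoms of diameter less than $1/m$. Then
\[
p^{(r)}_m(y)\ge\sum_j\mu_y(C_{m,j})^r\ge r_m^{1-r}=:c^{(r)}_m>0,
\]
by the power-mean (H\"older) inequality. This bound is uniform in $y$.

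For the far set, nonatomicity of $\mu_y$ makes the union of the pairwise diagonals $\{x_a=x_b\}$ a $\mu_y^{\otimes r}$-null set. Hence $q^{(r)}_n(y)\uparrow1$ on $Y_{\na}$. Set $V^{(r)}_n:=Y_{\na}\cap\{q^{(r)}_n>1/2\}$. Before any sequence is given, choose $n_r$ with $\nu(V^{(r)}_{n_r})>0$ and put $\delta_r:=1/(2n_r)$. This depends only on the extension, $\rho$ and $r$.

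\textbf{Step 3: visits along the sequence.} Now fix $\mathbf s$. Apply Lemma~\ref{lem:simultaneous} with $t_i=s_i^{-1}$ to the centered functions $\one_{A^{(r)}_m}-p^{(r)}_m\circ\widetilde\pi_r$ and $\one_{B^{(r)}_n}-q^{(r)}_n\circ\widetilde\pi_r$. The reverse Fatou argument producing $D_n$ carries over unchanged. Together these yield a positive-measure set of $y\in Y_{\na}$ with the following property: for $\mu_y^{\otimes r}$-a.e.\ tuple, $\max_{a<b}\rho(s_ix_a,s_ix_b)<1/m$ for infinitely many $i$ (for every $m$), and $\min_{a<b}\rho(s_ix_a,s_ix_b)>1/n_r$ for infinitely many $i$.

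\textbf{Step 4: the Mycielski step.} Fix such a $y$. The corresponding relation $R^{(r)}_y\subseteq X_y^r$ is a $G_\delta$ set, has full $\mu_y^{\otimes r}$-measure, and is therefore dense, since $\mu_y$ has full support on $X_y$. Moreover, $X_y=\supp(\mu_y)$ is perfect.

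Mycielski's theorem for $r$-ary relations \cite[Theorem~1]{Mycielski}, which is stated for arbitrary finite arities, gives a dense Mycielski set $M\subseteq X_y$. Every $r$-tuple of pairwise distinct points of $M$ lies in $R^{(r)}_y$. Any Cantor subset $K_{r,\mathbf s}\subseteq M$ then satisfies the conclusion, with $\limsup\min\ge 1/n_r>\delta_r$.

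\textbf{Expected obstacles.} The main point needing care is that the pairwise diagonals are null only because $\mu_y$ is nonatomic. This is why we restrict to $V^{(r)}_n\subseteq Y_{\na}$, which is nonempty in measure because $\nu(Y_{\na})>0$ by Lemma~\ref{lem:dichotomy}. Mycielski's theorem also requires a comeager or dense $G_\delta$ relation in each arity. Here density follows from full measure, as in the square case.
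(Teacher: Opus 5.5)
Your proposal is correct and follows the paper's own proof sketch essentially step for step. The shared steps are: the $r$-fold relatively independent joining, made relatively mixing via the product (telescoping) inequality on rectangles; the H\"older lower bound $\ell_m^{1-r}$ for the close set; the choice of $n_r$ with $\nu(V_{r,n_r})>0$ before the sequence is fixed; Lemma~\ref{lem:simultaneous} together with the reverse Fatou argument; and Mycielski's theorem for $r$-ary relations on the perfect set $\supp(\mu_y)$.
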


\begin{proof}[Proof sketch]
We indicate the modifications to the proof of
Theorem~\ref{thm:abstract}. Consider the $r$-fold
relatively independent joining
$
\lambda^{(r)}
:=
\int_Y \mu_y^{\otimes r}\,d\nu(y)
$
over $Y$, equipped with the diagonal $G$-action. The corresponding
extension
$
(X^r,\lambda^{(r)},G)\to(Y,\nu,G)
$
is relatively mixing. Indeed, for relative rectangles, this follows
from relative mixing of $X\to Y$ and the elementary inequality
\[
\left|
\prod_{j=1}^r a_j-\prod_{j=1}^r b_j
\right|
\leq
\sum_{j=1}^r|a_j-b_j|,
\qquad
a_j,b_j\in[0,1],
\]
and the general case follows by the same approximation argument as in
Lemma~\ref{lem:relative-square}.

For $m,n\geq 1$, put
\[
A_{r,m}
:=
\left\{
(x_1,\ldots,x_r)\in X^r:
\max_{1\leq a<b\leq r}\rho(x_a,x_b)<\frac1m
\right\}
\]
and
\[
B_{r,n}
:=
\left\{
(x_1,\ldots,x_r)\in X^r:
\min_{1\leq a<b\leq r}\rho(x_a,x_b)>\frac1n
\right\}.
\]
If $
X=C_{m,1}\sqcup\cdots\sqcup C_{m,\ell_m}
$
is a finite Borel partition whose atoms have diameter less than $1/m$,
then, H\:older inequality gives that, for every $y\in Y$, 
\[
\mu_y^{\otimes r}(A_{r,m})
\geq
\sum_{j=1}^{\ell_m}\mu_y(C_{m,j})^r
\geq
\ell_m^{\,1-r}>0.
\]
On the other hand, if $y\in Y_{\mathrm{na}}$, then
\[
\mu_y^{\otimes r}(B_{r,n})\nearrow 1
\qquad
(n\to\infty),
\]
as $\mu_y$ is nonatomic.

Define
\[
V_{r,n}
:=
Y_{\mathrm{na}}
\cap
\left\{
y\in Y:
\mu_y^{\otimes r}(B_{r,n})>\frac12
\right\}.
\]
Then
\[
V_{r,n}\nearrow Y_{\mathrm{na}}
\pmod{\nu}.
\]
Since $\nu(Y_{\mathrm{na}})>0$, we may choose $n_r\geq 1$,
before the sequence $\mathbf{s}$ is specified, such that
$
\nu(V_{r,n_r})>0,
$
and set
$
\delta_r:=\frac1{2n_r}.
$

Now fix an injective sequence $\mathbf{s}=(s_i)_{i\geq 1}$.
Applying Lemma~\ref{lem:simultaneous} on
$(X^r,\lambda^{(r)})$, and repeating the averaging and Fatou
arguments from Subsection~\ref{subsec:fiberwise-visits}, we obtain a
point $y\in Y_{\mathrm{na}}$ such that, for
$\mu_y^{\otimes r}$-a.e.
$(x_1,\ldots,x_r)\in X^r$,
\[
\max_{1\leq a<b\leq r}
\rho(s_i x_a,s_i x_b)<\frac1m
\]
holds for infinitely many $i$, for every $m\geq 1$, and
\[
\min_{1\leq a<b\leq r}
\rho(s_i x_a,s_i x_b)>\frac1{n_r}
\]
also holds for infinitely many $i$.

Put
\[
X_y:=\operatorname{supp}(\mu_y).
\]
The set of $r$-tuples in $X_y^r$ satisfying the preceding two
infinitude conditions is a dense $G_\delta$ subset of $X_y^r$.
Since $\mu_y$ is nonatomic, $X_y$ is perfect. The higher-order
form of Mycielski's theorem \cite[Theorem~1]{Mycielski} therefore gives a dense Mycielski set
$
M_{r,y}\subseteq X_y
$
such that every $r$-tuple of pairwise distinct points of $M_{r,y}$
satisfies
\[
\liminf_{i\to\infty}
\max_{1\leq a<b\leq r}
\rho(s_i x_a,s_i x_b)
=
0
\]
and
\[
\limsup_{i\to\infty}
\min_{1\leq a<b\leq r}
\rho(s_i x_a,s_i x_b)
\geq
\frac1{n_r}
>
\delta_r.\qedhere
\]
\end{proof}

\section{Applications}\label{sec:applications}
The abstract criterion reduces prescribed-sequence Li--Yorke chaos to the
existence of a nontrivial relatively mixing factor extension. We now apply
this criterion to two canonical entropy factors. The sofic Pinsker
factor yields the positive-sofic-entropy consequence, while the outer
Rokhlin Pinsker factor gives an application to essentially free actions of
arbitrary countably infinite discrete groups.

\subsection{Positive sofic entropy}\label{sec:sofic}
We briefly recall the relevant notation. For $d\in\mathbb N$, let
$\operatorname{Sym}(d)$ denote the symmetric group of
$\{1,\ldots,d\}$, equipped with the normalized Hamming metric
\[
d_{\mathrm{Hamm}}(\tau,\omega)
:=
\frac{1}{d}
\bigl|
\{v\in\{1,\ldots,d\}:\tau(v)\neq\omega(v)\}
\bigr|.
\]
A sequence of maps
\[
\Sigma
=
(\sigma_i:G\to\operatorname{Sym}(d_i))_{i\geq1},
\qquad
d_i\to\infty,
\]
is called a \emph{sofic approximation} to $G$ if
\[
\lim_{i\to\infty}
d_{\mathrm{Hamm}}
\bigl(
\sigma_i(st),
\sigma_i(s)\sigma_i(t)
\bigr)
=
0
\]
for every $s,t\in G$, and
\[
\lim_{i\to\infty}
d_{\mathrm{Hamm}}
\bigl(
\sigma_i(s),
\sigma_i(t)
\bigr)
=
1
\]
for every distinct $s,t\in G$. A countable group is called
\emph{sofic} if it admits a sofic approximation.

Throughout this subsection, we fix a sofic approximation $\Sigma$ of $G$
and use the definitions of topological and measure sofic entropy from
\cite{KerrLiVP}, denoted respectively by
\[
h_\Sigma^{\mathrm{top}}(X,G)
\qquad\text{and}\qquad
h_{\Sigma,\mu}(X,G).
\]

\begin{proof}[Proof of Theorem~\ref{thm:sofic}]
By the variational principle for sofic entropy
\cite[Theorem~6.1]{KerrLiVP},
\[
h^{\mathrm{top}}_\Sigma(X,G)
=
\sup_{\mu\in\M_G(X)}
h_{\Sigma,\mu}(X,G).
\]
Hence there exists $\mu\in\M_G(X)$ such that
\[
h_{\Sigma,\mu}(X,G)>0.
\]

Let
$
\pi_\Sigma:
(X,\mu,G)
\to
(Y_\Sigma,\nu_\Sigma,G)
$
be the Pinsker factor associated with $\Sigma$; see
\cite[Definition~3.1]{Hayes}. By definition, this is the largest factor
having zero sofic measure entropy. In particular, $\pi_\Sigma$ cannot be an
isomorphism.

By \cite[Theorem~3.4(i)]{Hayes}, the extension
$\pi_\Sigma$
is relatively mixing. Therefore
Theorem~\ref{thm:abstract}~\textup{(ii)} applies and completes the proof.
\end{proof}

\subsection{Positive Rokhlin entropy}
\label{sec:rokhlin}

We use Seward's definitions and conventions for Rokhlin entropy, outer
Rokhlin entropy, and the outer Rokhlin Pinsker factor; see
\cite{SewardKoopman}. We write
\[
h_G^{\mathrm{Rok}}(X,\mu)
\]
for the Rokhlin entropy of the p.m.p.\ $G$-system
$(X,\mu,G)$.

\begin{proof}[Proof of Theorem~\ref{thm:rokhlin}]
Let
$
\pi_+:
(X,\mu,G)
\to
(Y_+,\nu_+,G)
$
be the outer Rokhlin Pinsker factor. Thus $Y_+$ is the smallest factor
relative to which the extension has completely positive outer Rokhlin
entropy, abbreviated as relative $\mathrm{CPE}^+$.

Since
$
h_G^{\mathrm{Rok}}(X,\mu)>0,
$
the outer Rokhlin Pinsker factor is proper; equivalently,
$\pi_+$ is not an isomorphism. Moreover,
$(X,\mu,G)$ is $\mathrm{CPE}^+$ relative to $Y_+$; see the discussion
preceding \cite[Corollary~5.1]{SewardKoopman}.

Since the p.m.p. action $G\curvearrowright(X,\mu)$ is essentially free, 
\cite[Corollary~5.2 (1)]{SewardKoopman} implies that
$\pi_+$
is mixing relative to $Y_+$ in Seward's set-theoretic formulation. By
Remark~\ref{rem:set-formulation}, this is equivalent to relative mixing in
the sense of Definition~\ref{def:relmix}.
The proof is completed by applying
Theorem~\ref{thm:abstract}~\textup{(ii)}.
\end{proof}

\bibliographystyle{amsplain}
\bibliography{ref}

\end{document}